\documentclass[reqno,11pt]{amsart}
\usepackage{graphicx}
\usepackage{amsmath,amsthm,amsfonts,amssymb,latexsym,epsfig}
\newtheorem{theorem}{Theorem}[section]
\newtheorem{lemma}[theorem]{Lemma}

\theoremstyle{definition}

\usepackage{hyperref}
\theoremstyle{remark}

\usepackage{orcidlink}
\usepackage{pdflscape}
\numberwithin{equation}{section}
\usepackage{diagbox}
\usepackage{booktabs} 
\usepackage{array}
\usepackage[a4paper, left=1in, right=1in, top=1in, bottom=1in]{geometry}

\numberwithin{equation}{section}

\begin{document}

\title[short text for running head]{``2-color partitions modulo powers of $7$''}
\title{On 2-color partitions where one of the colors is multiples of $7^k$ }

%    author one information
\author[]{D. S. Gireesh$^{1}$\orcidlink{0000-0002-2804-6479}}
\address{$^{3}$Department of Mathematics, BMS College of Engineering, P.O. Box No.: 1908, Bull Temple Road,
Bengaluru-560 019, Karnataka, India.}
%\curraddr{}
\email{gireeshdap@gmail.com}
\thanks{}

%    author two information
\author[]{Shivashankar C.$^{2}$\orcidlink{0000-0001-7503-4414}}
\address{$^{1}$Department of Mathematics, Vidyavardhaka College of Engineering, Gokulam III Stage, Mysuru-570002, Karnataka, India.}
%\curraddr{}
\email{shankars224@gmail.com}
\thanks{}

%    author three information
\author[]{HemanthKumar B.$^{3}$\orcidlink{0000-0001-7904-293X}}
\address{$^2$Department of Mathematics, RV College of Engineering, RV Vidyanikethan Post, Mysore Road, Bengaluru-560 059, Karnataka, India.}
\email{hemanthkumarb.30@gmail.com}

\thanks{}

\date{}
          
\begin{abstract}
In this work, we investigate the arithmetic properties of $p_{1,7^k}(n)$, which counts 2-color partitions of $n$ where one of the colors appears only in parts that are multiples of $7^k$. By constructing generating functions for $p_{1,7^k}(n)$ across specific arithmetic progressions, we establish a set of Ramanujan-type infinite family of congruences modulo powers of $7$.
\end{abstract}

\medskip
\subjclass[2020]{primary 11P83; secondary 05A15, 05A17}
\keywords{Partitions; Two color Partitions; Generating Functions; Congruences}

\maketitle

\section{Introduction}
\label{intro}
A partition of a positive integer $n$ is a non-increasing sequence of positive integers whose sum equals $n$. Let $p(n)$ denote the number of partitions of $n$, with the generating function given by
\[\sum\limits_{n\geq0}p(n)q^n=\frac1{f_1}.\]
Throughout this paper, we define
\[f_r:=(q^r;q^r)_\infty=\prod\limits_{m=1}^{\infty}(1-q^{rm}).\]

Ramanujan \cite{Ram2} conjectured that if $\ell\in\{5,7,11\}$ and $0<\delta_{\ell,\beta}<\ell^\beta$ satisfying $24\delta_{\ell,\beta}\equiv 1\pmod{\ell^\beta}$, the partition function satisfies the congruence
\begin{equation}\label{RC}
    p\left(\ell^\beta n+\delta_{\ell,\beta}\right)\equiv 0\pmod{\ell^\beta}.
\end{equation}
Watson \cite{GNW} later proved this conjecture for $\ell=5$. However, Chowla \cite{CS} observed that since $p(243)$ is not divisible by $7^3$, this contradicts the conjecture for $\ell=7$ and $\beta=3$. The conjecture was subsequently refined and proved by Watson \cite{GNW}, establishing that for $k\geq1$,
\begin{equation}\label{pc11}
    p(7^{2k-1}n+\lambda_{2k-1})\equiv 0\pmod{7^k},
\end{equation}
and
\begin{equation}\label{pc12}
    p(7^{2k}n+\lambda_{2k})\equiv 0\pmod{7^{k+1}},
\end{equation}
where $\lambda_k$ is the modular inverse of $24$ modulo $7^k$.

Using the classical identities of Euler and Jacobi, Hirschhorn and Hunt \cite{HH} provided a simple proof of \eqref{RC} for $\ell=5$. Later, Garvan \cite{FG} extended these results to $\ell=7$ by deriving the generating functions
\begin{equation}\label{H1}
\sum_{n\geq 0}p\left(7^{2k-1}n+\lambda_{2k-1}\right)q^n=\sum_{j\geq 1}x_{2k-1,j} \, q^{j-1}\frac{f_7^{4j-1}}{f_1^{4j}}
\end{equation}
and
\begin{equation}\label{H2}
\sum_{n\geq 0}p\left(7^{2k}n+\lambda_{2k}\right)q^n=\sum_{j\geq 1}x_{2k,j} \, q^{j-1}\frac{f_7^{4j}}{f_1^{4j+1}},
\end{equation}
where the coefficient vectors $\mathbf{x}_k= (x_{k,1}, x_{k,2},\dots)$ are defined recursively by
$$\mathbf{x}_1=\left(x_{1,1},x_{1,2},\dots\right)=\left(7,7^2,0,\dots\right),$$ and for $k\geq1$,
\[
   x_{k+1, i}=\begin{cases}
  \displaystyle \sum_{j\geq1} x_{k, j} \,m_{4j,j+i}, \,\,\text{if}\,\,k\,\,\text{is odd},\\
   \displaystyle \sum_{j\geq1}x_{k, j} \,m_{4j+1,j+i}, \,\,\text{if}\, \,k \,\,\text{is even},
    \end{cases}
\]
where the first seven rows of $M=\left(m_{i,j}\right)_{i,j\geq 1}$ are
\begin{table}[h!]
\centering
\caption{Values of $m_{i,j}$, where $1 \leq i \leq 7$ and $1 \leq j \leq 7$.}
\renewcommand{\arraystretch}{1.5} % Adjust row height for readability
\setlength{\tabcolsep}{4pt} % Adjust column spacing
\begin{tabular}{|c|ccccccc|}
\hline
\diagbox[width=2em]{$i$}{$j$} & $1$ & $2$ & $3$ & $4$ & $5$ & $6$ & $7$ \\ \hline
1 & $7$ & $7^2$ & $0$ & $0$ & $0$ & $0$ & $0$ \\
2 & $10$ & $9 \times 7^2$ & $2 \times 7^4$ & $7^5$ & $0$ & $0$ & $0$ \\
3 & $3$ & $114 \times 7$ & $85 \times 7^3$ & $24 \times 7^5$ & $3 \times 7^7$ & $7^8$ & $0$ \\
4 & $0$ & $82 \times 7$ & $176 \times 7^3$ & $845 \times 7^4$ & $272 \times 7^6$ & $46 \times 7^8$ & $4 \times 7^{10}$ \\
5 & $0$ & $190$ & $1265 \times 7^2$ & $1895 \times 7^4$ & $1233 \times 7^6$ & $3025 \times 7^7$ & $620 \times 7^9$ \\
6 & $0$ & $27$ & $736 \times 7^2$ & $16782 \times 7^3$ & $20424 \times 7^5$ & $12825 \times 7^7$ & $4770 \times 7^9$ \\
7 & $0$ & $1$ & $253 \times 7^2$ & $1902 \times 7^4$ & $4246 \times 7^6$ & $31540 \times 7^7$ & $19302 \times 7^9$ \\ \hline
\end{tabular}
\end{table}

\begin{table}[h!]
\centering
\caption{Values of $m_{i,j}$, where $1 \leq i \leq 7$ and $8 \leq j \leq 14$.}
\renewcommand{\arraystretch}{1.5} % Adjust row height for readability
\setlength{\tabcolsep}{4pt} % Adjust column spacing
\begin{tabular}{|c|ccccccc|}
\hline
\diagbox[width=2em]{$i$}{$j$} & $8$ & $9$ & $10$ & $11$ & $12$ & $13$ & $14$ \\ \hline
1 & $0$ & $0$ & $0$ & $0$ & $0$ & $0$ & $0$ \\
2 & $0$ & $0$ & $0$ & $0$ & $0$ & $0$ & $0$ \\
3 & $0$ & $0$ & $0$ & $0$ & $0$ & $0$ & $0$ \\
4 & $7^{11}$ & $0$ & $0$ & $0$ & $0$ & $0$ & $0$ \\
5 & $75 \times 7^{11}$ & $5 \times 7^{13}$ & $7^{14}$ & $0$ & $0$ & $0$ & $0$ \\
6 & $7830 \times 7^{10}$ & $1178 \times 7^{12}$ & $111 \times 7^{14}$ & $6 \times 7^{16}$ & $7^{17}$ & $0$ & $0$ \\
7 & $7501 \times 7^{11}$ & $1944 \times 7^{13}$ & $2397 \times 7^{14}$ & $285 \times 7^{16}$ & $22 \times 7^{18}$ & $7^{20}$ & $7^{20}$ \\ \hline
\end{tabular}
\end{table}
and for $i\geq4$, $m_{i,1}=0$, and for $i\geq 8$, $m_{i,2}=0$ and for $i\geq8$, $j\geq3$,
\begin{align}
\nonumber m_{i,j}&=7m_{i-3,j-1}+35m_{i-2,j-1}+49m_{i-1,j-1}+m_{i-7,j-2}+7m_{i-6,j-2}\\&+21m_{i-5,j-2}+49m_{i-4,j-2}+147m_{i-3,j-2}+343m_{i-2,j-2}+343m_{i-1,j-2}.\label{mij}
\end{align}

Let $p_{1,\ell}(n)$ be the number of 2-color partitions of $n$ where one of the colors appears only in parts that are multiples of $\ell$; its generating function is given by
\begin{equation}\label{bl}
\sum\limits_{n\geq0}p_{1,\ell}(n)q^n=\frac{1}{f_1f_{\ell}}.
\end{equation}

Ahmed, Baruah, and Dastidar \cite{ABD} discovered new congruences modulo $5$, proving that for certain values of $t$
\begin{equation}\label{AB}
p_{1,\ell}\left(25n+t\right)\equiv 0 \pmod{5},
\end{equation}
where $\ell\in\{0, 1, 2, 3, 4, 5, 10, 15, 20\}$ and $\ell+t=24$. They further conjectured similar congruences for $\ell= 7, 8, 17$ which were confirmed by Chern \cite{SC}. Additionally, Chern \cite{SC} established a new family of congruences:
\[p_{1,4}\left(49n+t\right)\equiv 0 \pmod{7}, \text{ for } t\in\{11, 25, 32, 39\}.\]

Further generalizations were made by Wang \cite{LW}, who derived congruences for $p_{1,5}(n)$, strengthening prior results. Ranganath \cite{RD} extended these findings to $p_{1,25}(n)$ and $p_{1,49}(n)$, proving that for all $n,\beta \geq0$,
\begin{equation}\label{RD1}
p_{1,25}\left(5^{2\beta+1}n+\frac{7\times5^{2\beta+1}+13}{12}\right)\equiv0\pmod{5^{\beta+1}},
\end{equation}
\begin{equation}\label{RD2}
p_{1,25}\left(5^{2\beta+2}n+\frac{11\times5^{2\beta+2}+13}{12}\right)\equiv0\pmod{5^{\beta+2}},
\end{equation}
\begin{equation}\label{RD3}
p_{1,49}\left(7^{2\beta+1}n+\frac{5\times7^{2\beta+1}+25}{12}\right)\equiv0\pmod{7^{\beta+1}},
\end{equation}
and
\begin{equation}\label{RD4}
p_{1,49}\left(7^{2\beta+2}n+\frac{11\times7^{2\beta+2}+25}{12}\right)\equiv0\pmod{7^{\beta+2}}.
\end{equation}
In this paper, we establish congruences for $p_{1,7^k}$ for all positive integers $k$. The results presented here are generalized and extend those of Ranganath \cite{RD}.

The main results are as follow:
\begin{theorem}\label{th1}
For each $n, \beta \geq0$, and $k\geq1$, we have
\begin{equation}\label{c1}
p_{1,7^{2k-1}}\left(7^{2k+\beta-1}n+\frac{16\cdot7^{2k+\beta-1}+7^{2k-1}+1}{24}\right)\equiv 0\pmod{ 7^{k}},
\end{equation}
\begin{equation}\label{c2}
p_{1,7^{2k}}\left(7^{2k+2\beta-1}n+\frac{10\cdot7^{2k+2\beta-1}+7^{2k}+1}{24}\right)\equiv 0\pmod{ 7^{k+\beta}},
\end{equation}
\begin{equation}\label{c3}
p_{1,7^{2k}}\left(7^{2k+2\beta}n+\frac{22\cdot7^{2k+2\beta}+7^{2k}+1}{24}\right)\equiv 0\pmod{ 7^{k+\beta+1}},
\end{equation}
\begin{equation}\label{c4}
p_{1,7^{2k}}\left(7^{2k+2\beta+1}n+\frac{(24r+22)\cdot7^{2k+2\beta}+7^{2k}+1}{24}\right)\equiv 0\pmod{ 7^{k+\beta+2}},
\end{equation}
where $r\in \{3,4,6\}.$
\end{theorem}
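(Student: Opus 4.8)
\medskip
\noindent\textbf{Proof sketch (plan).}
The key elementary point is that \eqref{bl} encodes a convolution: since $1/f_{7^{m}}=\sum_{n\ge0}p(n)q^{7^{m}n}$,
\[
p_{1,7^{m}}(N)=\sum_{n\ge 0}p(n)\,p\!\left(N-7^{m}n\right).
\]
A direct check of the shifts $\delta$ in \eqref{c1}--\eqref{c4} shows that $24\delta\equiv1\pmod{7^{2k-1}}$ in \eqref{c1} and in \eqref{c2} with $\beta=0$, while $24\delta\equiv1\pmod{7^{2k}}$ in \eqref{c2} with $\beta\ge1$ and in \eqref{c3}--\eqref{c4}; hence in each case $N-7^{m}n$ is, for \emph{every} $n$, congruent to $\lambda_{2k-1}\pmod{7^{2k-1}}$ (resp.\ to $\lambda_{2k}\pmod{7^{2k}}$). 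Applying Watson's congruences \eqref{pc11}--\eqref{pc12} term by term then yields \eqref{c1} for all $\beta$, \eqref{c2} for $\beta\le1$, and \eqref{c3} for $\beta=0$ with no further work.

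For the sharper moduli in the remaining cases I would pass to generating functions. Multiplying \eqref{H2} by $1/f_1$ and using the convolution gives
\[
H(q):=\sum_{m\ge0}p_{1,7^{2k}}\!\left(7^{2k}m+\lambda_{2k}\right)q^{m}=\sum_{j\ge1}x_{2k,j}\,q^{j-1}\frac{f_7^{4j}}{f_1^{4j+2}},
\]
and, because \eqref{pc11}--\eqref{pc12} force $7^{k}\mid x_{2k-1,j}$ and $7^{k+1}\mid x_{2k,j}$ for all $j$, every coefficient of $H$ is divisible by $7^{k+1}$. Moreover, after factoring out the power $7^{2k}$, each progression in \eqref{c2} ($\beta\ge1$), \eqref{c3}, and \eqref{c4} is the coefficient sequence of $\mathcal U_{i_s}\!\cdots\mathcal U_{i_1}H$, where $\mathcal U_i\colon\sum_n a_nq^n\mapsto\sum_n a_{7n+i}q^n$; one finds $s=2\beta-1,\,2\beta,\,2\beta+1$ respectively, with the digit string $(i_1,\dots,i_s)$ reading $2,6,2,6,\dots$ for \eqref{c2} and \eqref{c3} and ending in an extra digit $r\in\{3,4,6\}$ for \eqref{c4}.

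Thus the problem reduces to a Garvan-type dissection structure theorem for the twisted series $\Phi_{a,b}[\mathbf{z}]:=\sum_{j\ge1}z_{j}\,q^{j-1}f_7^{4j+a}/f_1^{4j+b}$, of which $H=\Phi_{0,2}[\mathbf{x}_{2k}]$ is the initial instance. The plan is to prove that $\mathcal U_{2}$ sends $\Phi_{0,2}[\mathbf{z}]$ to a series $\Phi_{a',b'}[\mathbf{z}M_1]$ of an intermediate shape and $\mathcal U_{6}$ sends that back to a series $\Phi_{0,2}[\mathbf{z}'M_2]$ --- the same even/odd alternation underlying \eqref{H1}--\eqref{H2} --- where $M_1,M_2$ are explicit integer matrices assembled from the $7$-dissections of the mixed quotients $f_7^{a}/f_1^{b}$ together with the extra factor $1/f_1$ (so that the classical $7$-dissection of $1/f_1^{2}$ enters alongside Garvan's), built in the spirit of \eqref{mij}; and that the $7$-adic valuation of the coefficient vector increases by at least $1$ under the composite $\mathcal U_{6}\mathcal U_{2}$, while $\mathcal U_{r}\,\Phi_{0,2}[\mathbf{z}]$ has all coefficients divisible by one more power of $7$ when $r\in\{3,4,6\}$. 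Granting this, $\beta$ applications of $\mathcal U_{6}\mathcal U_{2}$ to $H$ give coefficients divisible by $7^{k+1+\beta}$, which is \eqref{c3}; $\beta-1$ such pairs followed by one more $\mathcal U_{2}$ give \eqref{c2}; and the extra $\mathcal U_{r}$ step supplies the last power of $7$ for \eqref{c4}.

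The principal obstacle is this structure theorem, and within it the construction and the $7$-adic estimate for $M_1,M_2$: one must compute the $7$-dissections of the quotients $f_7^{a}/f_1^{b}$ that occur here (carrying one more factor of $1/f_1$ than in the classical $p(n)$ theory), read off the linear recursion they induce on the coefficient vectors, and verify that its entries carry the required powers of $7$ --- in particular confirming that the exceptional residues are precisely $\{3,4,6\}$. I expect this needs no genuinely new idea beyond Garvan's method, but the sharp bookkeeping --- so as to land on $7^{k+\beta}$, $7^{k+\beta+1}$, $7^{k+\beta+2}$ rather than on weaker exponents --- will be the substantial part of the work.
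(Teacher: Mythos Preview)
Your plan is sound and largely mirrors the paper's argument: derive generating functions of the shape $\sum_j z_j\,q^{j-1}f_7^{4j+a}/f_1^{4j+b}$ for $p_{1,7^{2k}}$ along the relevant progressions by iterating a Garvan-type operator, and track the $7$-adic valuation of the coefficient vector at each step. Two places where the paper is more economical than you anticipate. First, your ``principal obstacle'' dissolves: the action of the huffing operator $H$ on $q^{i+c}f_7^{4i}/f_1^{4i+b}$ for $b\in\{0,1,2\}$ is expressed directly through Garvan's \emph{existing} matrix $(m_{i,j})$ and his existing bound $\pi(m_{i,j})\ge\bigl[(7j-2i-1)/4\bigr]$ --- no new dissections and no new matrices $M_1,M_2$ are required (see Lemma~\ref{Hi1}); your $\Phi_{0,2}[\mathbf{x}_{2k}]$ is precisely the paper's \eqref{G4} at $\beta=0$, and the alternation you describe is exactly the pair \eqref{H4i}/\eqref{H4i+2}. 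Second, for \eqref{c4} the paper does not carry out another full matrix step: it observes that modulo $7^{k+\beta+3}$ only the $j=1$ term of \eqref{G4} survives, reduces $f_7^{4}/f_1^{6}\equiv f_7^{3}f_1\pmod 7$ via the binomial theorem, and then reads off the vanishing residues $\{3,4,6\}$ from the classical $7$-dissection of $f_1$. Conversely, your convolution-plus-Watson argument for \eqref{c1} (and for the small-$\beta$ cases of \eqref{c2}, \eqref{c3}) is more direct than the paper's treatment, which still sets up a separate iterative generating function (Theorem~\ref{T1}) even though the target modulus $7^k$ does not grow with $\beta$.
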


\section{Preliminaries}
In this section, we state some lemmas which play a vital role in proving our results.
Let $H$ be the “huffing” operator modulo $7$, that is,
\[H\left(\sum{p_nq^n}\right)=\sum{p_{1,7n}q^{7n}}.\]
\begin{lemma}[\cite{FG}, Lemma 3.1 and 3.4]
 If $\xi= \frac{f_1}{q^2 f_{49}}$ and $T = \frac{f_7^4}{q^7 f_{49}^4}$, then
\begin{equation}\label{Gu1}
  H \left( \xi^{-i} \right)  = \sum_{j\geq1} m_{i,j} T^{-j}
\end{equation}
and
\begin{equation}\label{Gu2}
  H \left( \xi^{-4i} \right)  = \sum_{j\geq1} m_{4i,i+j} T^{-i-j}. 
\end{equation}
\end{lemma}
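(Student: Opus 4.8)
The plan is to exploit the fact that the huffing operator $H$ is an average over seventh roots of unity. Writing $\omega=e^{2\pi i/7}$, one has $H(g)(q)=\frac{1}{7}\sum_{k=0}^{6}g(\omega^k q)$, since $\frac{1}{7}\sum_{k=0}^{6}\omega^{kn}$ equals $1$ when $7\mid n$ and $0$ otherwise. Applying this to $g=\xi^{-i}$ gives
\[
H\left(\xi^{-i}\right)=\frac{1}{7}\sum_{k=0}^{6}\xi(\omega^k q)^{-i},
\]
so $7\,H(\xi^{-i})$ is exactly the $i$-th power sum of the seven ``conjugates'' $\xi_k:=\xi(\omega^k q)$. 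Because $f_{49}(\omega^k q)=f_{49}(q)$ (as $\omega^{49}=1$) while $f_1(\omega^k q)=\prod_n(1-\omega^{kn}q^n)$, a standard root-of-unity sieving gives $\prod_{k=0}^{6}f_1(\omega^k q)=f_7^8/f_{49}$, and hence (using $\prod_k\omega^{2k}=\omega^{42}=1$)
\[
\prod_{k=0}^{6}\xi_k=\frac{f_7^8/f_{49}}{q^{14}f_{49}^{7}}=\frac{f_7^8}{q^{14}f_{49}^8}=T^2 .
\]
Each elementary symmetric function of $\xi_0,\dots,\xi_6$ is invariant under $q\mapsto\omega q$, hence a power series in $q^7$, and the same sieving expresses all of them as explicit Laurent polynomials in $T^{-1}$. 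This yields a degree-$7$ modular equation of the shape
\[
\xi^{-7}=\left(49T^{-1}+343T^{-2}\right)\xi^{-6}+\left(35T^{-1}+343T^{-2}\right)\xi^{-5}+\left(7T^{-1}+147T^{-2}\right)\xi^{-4}+49T^{-2}\xi^{-3}+21T^{-2}\xi^{-2}+7T^{-2}\xi^{-1}+T^{-2},
\]
whose constant term $T^{-2}$ already matches $\prod_k\xi_k^{-1}=T^{-2}$.

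Next I would deduce \eqref{Gu1} from this modular equation. Multiplying it by $\xi^{-(i-7)}$ expresses $\xi^{-i}$ as a $\mathbb{Z}[T^{-1}]$-combination of $\xi^{-(i-1)},\dots,\xi^{-(i-7)}$. Applying $H$ and using that $T^{-1}$ is a power series in $q^7$ --- so that $H(T^{-a}g)=T^{-a}H(g)$ --- turns this into the operator recurrence
\[
H(\xi^{-i})=\left(49T^{-1}+343T^{-2}\right)H(\xi^{-(i-1)})+\cdots+T^{-2}H(\xi^{-(i-7)}).
\]
Assuming inductively that $H(\xi^{-m})=\sum_j m_{m,j}T^{-j}$ for $m<i$, extracting the coefficient of $T^{-j}$ reproduces precisely the recurrence \eqref{mij}. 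The induction is seeded by the base cases $1\le i\le 7$, which I would compute directly from the classical $7$-dissection of $f_1$ (equivalently of $1/f_1^{\,i}$, after pulling the $q^7$-series $f_{49}^{\,i}$ through $H$); these computations produce exactly the entries tabulated for $m_{i,j}$, $1\le i\le 7$, and complete \eqref{Gu1} for all $i\ge1$.

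For \eqref{Gu2} I would use a factorization rather than the recurrence. From the definitions, $\xi^{-4}=q^8 f_{49}^4/f_1^4=T^{-1}\,\phi$ with $\phi:=q f_7^4/f_1^4$, so $\xi^{-4i}=T^{-i}\phi^{\,i}$ and therefore $H(\xi^{-4i})=T^{-i}H(\phi^{\,i})$. Since $\phi^{\,i}=q^i f_7^{4i}/f_1^{4i}$ has lowest term $q^i$ with $i\ge1$, the series $H(\phi^{\,i})$ is a power series in $q^7$ with vanishing constant term; as $T^{-1}=q^7(1+\cdots)$, such a series is uniquely $\sum_{j\ge1}c_j T^{-j}$. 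Hence $H(\xi^{-4i})=\sum_{j\ge1}c_j T^{-i-j}$, which is \eqref{Gu2} and simultaneously shows $m_{4i,1}=\cdots=m_{4i,i}=0$.

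The main obstacle is the explicit determination of the symmetric functions $e_1,\dots,e_6$ --- equivalently the coefficients of the degree-$7$ modular equation --- together with the verification of the seven base cases: both rest on the $7$-dissection of Euler's product $f_1$ and its powers, a classical but lengthy and error-prone computation. Everything downstream (the recurrence, the vanishing in \eqref{Gu2}, and the propagation to all $i$) is then formal, driven by the averaging identity $7\,H(\xi^{-i})=\sum_k\xi_k^{-i}$ and the commutation $H(T^{-a}g)=T^{-a}H(g)$.
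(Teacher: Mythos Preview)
The paper does not prove this lemma at all: it is quoted verbatim from Garvan \cite{FG} (Lemmas~3.1 and~3.4) and used as a black box, so there is no ``paper's own proof'' to compare against. Your sketch is essentially Garvan's original argument---derive the degree-$7$ modular equation satisfied by $\xi^{-1}$ over $\mathbb{Z}[T^{-1}]$ from the $7$-dissection of $f_1$ (equivalently, from the elementary symmetric functions of the seven conjugates $\xi(\omega^k q)^{-1}$), use it to propagate \eqref{Gu1} by the recurrence \eqref{mij} once the seven initial rows are checked, and obtain \eqref{Gu2} from the factorization $\xi^{-4i}=T^{-i}(qf_7^4/f_1^4)^i$ together with the commutation $H(T^{-a}g)=T^{-a}H(g)$.

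Your outline is sound and the specific modular equation you wrote down is exactly the one underlying \eqref{mij}; your identification $\prod_k\xi_k=T^2$ and the constant-term check $e_7=T^{-2}$ are correct. The only point deserving a word of care is the assertion that the elementary symmetric functions $e_1,\dots,e_6$ are \emph{polynomials} (not merely formal power series) in $T^{-1}$: invariance under $q\mapsto\omega q$ gives only that they lie in $\mathbb{Z}[[q^7]]$, and the finiteness genuinely requires the explicit dissection computation you flag as the main obstacle. You correctly acknowledge this; once those seven coefficients and the seven base rows are verified (as in Garvan), the rest is indeed formal.
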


\begin{lemma}\label{Hi1}
For all $i\geq1$, we have

\begin{equation}\label{H4i+1}
H \left( q^{i+2} \frac{f_{7}^{4i}}{f_1^{4i+1}} \right)= \sum_{j\ge 1} m_{4i+1,i+j} q^{7j} \frac{f_{49}^{4j-1}}{f_7^{4j}},
\end{equation}
\begin{equation}\label{H4i+2}
H \left( q^{i+4} \frac{f_{7}^{4i}}{f_1^{4i+2}} \right)= \sum_{j \ge 1}  m_{4i+2	,i+j} \, q^{7j} \frac{f_{49}^{4j-2}}{f_7^{4j}},
\end{equation}
and
\begin{equation}\label{H4i}
H \left( q^{i} \frac{f_{7}^{4i}}{f_1^{4i}} \right)= \sum_{j\ge 1} m_{4i, i+j} \, q^{7j} \frac{f_{49}^{4j}}{f_7^{4j}}.
\end{equation}
%\begin{equation}\label{H6i+1}
%H \left( q^{i+1} \frac{f_{5}^{6i+1}}{f_1^{6i+1}} \right) = \sum_{j=1}^{5j+1} m_{6i+1, i+j} \,q^{5j} \frac{f_{25}^{6j-1}}{f_5^{6j-1}}.
%\end{equation}
\end{lemma}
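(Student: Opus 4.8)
The plan is to prove Lemma~\ref{Hi1} by the same device Garvan uses to establish \eqref{Gu1} and \eqref{Gu2}: reduce each huffing identity to the known action of $H$ on powers of $\xi=f_1/(q^2f_{49})$. The key observation is that the three expressions on the left-hand sides of \eqref{H4i+1}, \eqref{H4i+2} and \eqref{H4i} are, up to a factor that is \emph{invariant} under $H$ (a power series in $q^7$ built from $f_7$ and $f_{49}$), exactly $\xi^{-(4i+1)}$, $\xi^{-(4i+2)}$ and $\xi^{-4i}$ respectively. Concretely, writing $T=f_7^4/(q^7f_{49}^4)$ as in the cited lemma, one checks the algebraic identities
\begin{align*}
q^{\,i}\,\frac{f_7^{4i}}{f_1^{4i}} &= \frac{f_7^{4i}}{f_{49}^{4i}}\cdot\Bigl(q^{2}\,\frac{f_{49}}{f_1}\Bigr)^{4i}\cdot\frac{1}{q^{7i}} = T^{i}\cdot\xi^{-4i}\cdot\frac{f_{49}^{4i}}{f_7^{4i}}\cdot\frac{f_7^{4i}}{f_{49}^{4i}},
\end{align*}
and similarly for the other two, the point being that each left-hand side equals $\xi^{-(4i+s)}$ ($s\in\{0,1,2\}$) multiplied by a monomial in $T$ and by $f_{49}/f_7$-type factors that live entirely in powers of $q^7$. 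Since $H$ is linear and multiplicative against any series supported on exponents divisible by $7$, these $H$-invariant factors pull straight through the operator.

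The steps, in order, are: (i) rewrite $q^{i+2}f_7^{4i}/f_1^{4i+1}$, $q^{i+4}f_7^{4i}/f_1^{4i+2}$, and $q^{i}f_7^{4i}/f_1^{4i}$ each as (an explicit $H$-invariant prefactor in $q^7$)$\,\times\,\xi^{-(4i+1)}$, $\xi^{-(4i+2)}$, $\xi^{-4i}$ respectively, checking the exponents of $q$ balance — this is where one verifies that $i+2$, $i+4$, $i$ are precisely the shifts forced by the powers $-2$ in $\xi$; (ii) apply $H$, pulling out the invariant prefactor; (iii) invoke \eqref{Gu1} to expand $H(\xi^{-m})=\sum_{j\ge1}m_{m,j}T^{-j}$ (for the $4i$ case, optionally use the sharper \eqref{Gu2} directly to get the index $i+j$); (iv) substitute $T^{-1}=q^7f_{49}^4/f_7^4$ and recombine with the prefactor, collecting powers of $q$, $f_7$, and $f_{49}$; (v) reindex the sum so that the summand over $j$ matches the claimed $m_{4i+1,i+j}q^{7j}f_{49}^{4j-1}/f_7^{4j}$ and its analogues. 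The reindexing in (v) — a shift $j\mapsto i+j$ in the summation variable — is what converts the "flat" $m_{m,j}$ coming out of \eqref{Gu1} into the $m_{4i+s,i+j}$ appearing in the statement, and it is the same bookkeeping already carried out in \eqref{Gu2}.

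The main obstacle is purely the bookkeeping in steps (i) and (iv)–(v): one must track three separate exponent ledgers (the power of $q$, the net power of $f_7$, and the net power of $f_{49}$) through the substitution $T^{-1}\mapsto q^7f_{49}^4/f_7^4$ and confirm that after reindexing they reduce exactly to $7j$, $-4j$, and $4j-s$ respectively for $s=1,2,0$. There is no analytic subtlety — $H$ is a formal operator and all identities are between formal $q$-series — so once the exponent arithmetic is pinned down the lemma follows. A clean way to present this is to do the $s=0$ case \eqref{H4i} in full detail (it is the cleanest, and in fact is immediate from \eqref{Gu2} with $m=i$ after the prefactor $f_7^{4i}/f_{49}^{4i}$ is pulled out), and then remark that \eqref{H4i+1} and \eqref{H4i+2} follow \emph{mutatis mutandis} by carrying the extra $q^{2}/ \,\cdot\,$ and $q^{4}/\,\cdot\,$ factors, which only shift the power of $f_{49}$ in the final series from $4j$ down to $4j-1$ and $4j-2$.
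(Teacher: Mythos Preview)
Your proposal is correct and follows essentially the same route as the paper: rewrite \eqref{Gu1} as $H\bigl(q^{2m}f_{49}^{m}/f_1^{m}\bigr)=\sum_{j\ge1}m_{m,j}\,q^{7j}f_{49}^{4j}/f_7^{4j}$, specialize to $m=4i,\,4i+1,\,4i+2$, pull the $H$-invariant $q^7$-series prefactor through, and reindex $j\mapsto i+j$. The one ingredient you leave implicit is the vanishing $m_{4i+s,j}=0$ for $1\le j\le i$ (the paper states and uses ``$m_{4i+1,j}=0$ for $1\le j\le i$'' explicitly), which is precisely what legitimizes truncating the reindexed sum to $j\ge1$; your pointer to ``the same bookkeeping already carried out in \eqref{Gu2}'' covers this, but it should be said outright in the written proof.
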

\begin{proof}
We can rewrite \eqref{Gu1} as
\begin{equation}\label{Hi}
H \left( q^{2i} \frac{f_{49}^i}{f_1^i} \right) = \sum_{j\ge 1}m_{i,j} \, q^{7j} \frac{f_{49}^{4j}}{f_7^{4j}}.
\end{equation}
From \eqref{Hi} and the fact that $m_{4i+1, j}=0$ for $1\leq j \leq i,$ we have
\begin{align*}
H\left(\left(q^2\frac{f_{49}}{f_1}\right)^{4i+1}\right)&=\sum_{j=i+1}^{4i+1} m_{4i+1,j} \,q^{7j} \frac{f_{49}^{4j}}{f_{7}^{4j}}.\\
&=\sum_{j\ge 1} m_{4i+1,i+j} \,q^{7i+7j} \frac{f_{49}^{4i+4j}}{f_{7}^{4i+4j}},
\end{align*}
which yields \eqref{H4i+1}. Similarly, we can prove \eqref{H4i+2} and \eqref{H4i}.
\end{proof}

\section{Generating functions}
In this section, we establish generating functions for $p_{1,7^{\ell}}(n)$ within specific arithmetic progressions.
\begin{theorem}\label{T1}
For each $\beta \geq0$ and $k\geq1$, we have
\begin{equation}\label{G1}
    \sum_{n\geq 0}p_{1,7^{2k-1}}\left(7^{2k+\beta-1}n+\frac{16\cdot7^{2k+\beta-1}+7^{2k-1}+1}{24}\right)q^n=\sum_{i\geq 1}y^{(2k-1)}_{\beta+1,i} \, q^{i-1}\frac{f_7^{4i-1}}{f_1^{4i+1}},
\end{equation}
%and 
%\begin{equation}\label{G2}
%\sum_{n\geq 0}p_{1,5^{2k-1}}\left(5^{2k+2\beta}n+\frac{4\cdot5^{2k+2\beta}-5^{2k-1}+1}{24}\right)q^n=\sum_{i\geq 1}y^{(2k-1)}_{2\beta+2,i} \, q^{i-1}\frac{f_5^{6i-5}}{f_1^{6i-5}}.
%\end{equation}
where the coefficient vectors are defined as follows:
\begin{equation*}
y_{1,j}^{(2k-1)}=x_{2k-1,j} \text{ and }
y^{(2k-1)}_{\beta+1,j}= \sum_{i\geq1}y^{(2k-1)}_{\beta,i} m_{4i+1,j+i}
\end{equation*}
for all $\beta, j\geq 1$.
\end{theorem}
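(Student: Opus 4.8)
The plan is to proceed by induction on $\beta$, using the huffing operator $H$ and the action of the matrix $M$ as encoded in Lemma \ref{Hi1}. First I would establish the base case $\beta = 0$, i.e.\ the identity
\[
\sum_{n\geq 0}p_{1,7^{2k-1}}\left(7^{2k-1}n+\frac{16\cdot 7^{2k-1}+7^{2k-1}+1}{24}\right)q^n=\sum_{i\geq 1}x_{2k-1,i}\, q^{i-1}\frac{f_7^{4i-1}}{f_1^{4i+1}}.
\]
To get this, start from \eqref{bl} with $\ell = 7^{2k-1}$, so $\sum p_{1,7^{2k-1}}(n)q^n = 1/(f_1 f_{7^{2k-1}})$. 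The factor $f_{7^{2k-1}}$ contributes only to exponents divisible by $7^{2k-1}$, so iterating dissection $2k-1$ times reduces the problem to extracting an arithmetic progression from $1/f_1$ alone, which is exactly governed by Garvan's generating functions \eqref{H1}. Concretely, I expect that after peeling off the $f_{7^{2k-1}}$ (which, at each of the $2k-1$ stages where we restrict to the relevant residue class, becomes $f_{7^{2k-1-j}}$ and eventually a harmless power of $f_1$ in the new variable), one lands on $\sum_n p(7^{2k-1}n + \lambda_{2k-1})q^n$ times a correction factor, and \eqref{H1} then yields the claimed shape with $y^{(2k-1)}_{1,j} = x_{2k-1,j}$. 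The careful bookkeeping of the progression $\frac{16\cdot 7^{2k+\beta-1}+7^{2k-1}+1}{24}$ — checking that $24$ times it is $\equiv 1 \pmod{7^{2k-1}}$ in the right way and matches $\lambda_{2k-1}$ after the reductions — is the fiddly part of the base case.

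For the inductive step, assume
\[
\sum_{n\geq 0}p_{1,7^{2k-1}}\left(7^{2k+\beta-1}n+\frac{16\cdot 7^{2k+\beta-1}+7^{2k-1}+1}{24}\right)q^n=\sum_{i\geq 1}y^{(2k-1)}_{\beta+1,i}\, q^{i-1}\frac{f_7^{4i-1}}{f_1^{4i+1}}.
\]
Replacing $q$ by $q^7$ and multiplying by $q^{\cdot}$ so as to match the argument of $H$ in \eqref{H4i+1}, I would apply the huffing operator $H$ to both sides. On the left, $H$ composed with the substitution extracts the sub-progression $n \mapsto 7n + c$ and increases the modulus exponent by one, producing
\[
\sum_{n\geq 0}p_{1,7^{2k-1}}\left(7^{2k+\beta}n+\tfrac{16\cdot 7^{2k+\beta}+7^{2k-1}+1}{24}\right)q^n
\]
after dividing out the appropriate power of $q$ and rescaling $f_{49}\to f_7$, $f_7 \to f_1$. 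On the right, term by term,
\[
H\!\left(q^{7(i-1)}\frac{f_{49}^{4i-1}}{f_7^{4i+1}}\right)
\]
is handled by \eqref{H4i+1}: writing $q^{7(i-1)}f_{49}^{4i-1}/f_7^{4i+1} = q^{-7}\cdot q^{7i}\cdot f_{49}^{4i-1}/f_7^{4i+1}$ and noting $\frac{f_{49}^{4i-1}}{f_7^{4i+1}} = \frac{f_7^{4i}}{f_1^{4i+1}}\Big|_{q\to q^{1/7}\text{-style scaling}}$ — more precisely, matching with the left side of \eqref{H4i+1} which reads $H(q^{i+2}f_7^{4i}/f_1^{4i+1}) = \sum_j m_{4i+1,i+j} q^{7j} f_{49}^{4j-1}/f_7^{4j}$ — gives $\sum_j m_{4i+1,i+j}\, q^{7j}\, f_{49}^{4j-1}/f_7^{4j}$ up to the overall $q^{-7}$. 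Summing over $i$, rescaling by $q^{1/7}$ in the standard way, and swapping the order of summation, the coefficient of $q^{j-1} f_7^{4j-1}/f_1^{4j+1}$ becomes $\sum_{i\geq 1} y^{(2k-1)}_{\beta+1,i}\, m_{4i+1,j+i}$, which by definition is exactly $y^{(2k-1)}_{\beta+2,j}$. This closes the induction.

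The main obstacle I anticipate is \emph{not} the matrix recursion — that is essentially formal once Lemma \ref{Hi1} is in hand — but rather the alignment of exponents: making sure that the power of $q$ multiplying $f_7^{4i-1}/f_1^{4i+1}$ in \eqref{G1}, after the substitution $q \mapsto q^7$, exactly matches the argument $q^{i+2} f_7^{4i}/f_1^{4i+1}$ required by \eqref{H4i+1}, and that the residue $\frac{16\cdot 7^{2k+\beta-1}+7^{2k-1}+1}{24}$ transforms under $n \mapsto 7n + (\text{shift})$ into $\frac{16\cdot 7^{2k+\beta}+7^{2k-1}+1}{24}$ with an \emph{integer} shift. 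This requires verifying a congruence of the form $7\cdot\frac{16\cdot 7^{2k+\beta-1}+7^{2k-1}+1}{24} + (\text{shift}) = \frac{16\cdot 7^{2k+\beta}+7^{2k-1}+1}{24}$, i.e.\ that the shift equals $\frac{-6\cdot 7^{2k-1}-6}{24} = -\frac{7^{2k-1}+1}{4}$, and then reconciling this with the integer exponent $i+2$ appearing in \eqref{H4i+1}; I would pin this down first, since every subsequent step depends on it. Once the exponent bookkeeping is correct, the statement $p_{1,7^{2k-1}}(\cdots)\equiv 0 \pmod{7^k}$ of Theorem \ref{th1}, equation \eqref{c1}, will follow immediately from \eqref{G1} together with the observation (to be proved separately, or cited from Garvan's analysis of the $7$-adic valuations of the $x$-vectors) that every entry $y^{(2k-1)}_{\beta+1,i}$ is divisible by $7^k$.
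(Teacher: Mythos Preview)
Your proposal follows essentially the same route as the paper: induction on $\beta$, with the base case obtained directly from Garvan's identity \eqref{H1} (since $1/f_{7^{2k-1}}$ is a series in $q^{7^{2k-1}}$ and therefore passes unchanged through the extraction of the progression $7^{2k-1}n+\lambda_{2k-1}$ from $1/f_1$), and the inductive step via the huffing operator together with \eqref{H4i+1}, yielding exactly the recursion $y^{(2k-1)}_{\beta+2,j}=\sum_i y^{(2k-1)}_{\beta+1,i}\,m_{4i+1,i+j}$.

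One correction to the mechanics of your inductive step: do not substitute $q\mapsto q^7$ before applying $H$; that makes both sides series in $q^7$ and renders $H$ trivial. The paper's cleaner execution is to multiply \eqref{G1} through by $q^3$, rewrite each right-hand term as $q^{i+2}f_7^{4i-1}/f_1^{4i+1}=(1/f_7)\cdot q^{i+2}f_7^{4i}/f_1^{4i+1}$, pull the factor $1/f_7$ through $H$ (legitimate because $f_7$ is a series in $q^7$), apply \eqref{H4i+1} verbatim, and only afterwards replace $q^7$ by $q$. On the left this picks out $n\equiv 4\pmod 7$, and since $7\cdot 4\cdot 7^{2k+\beta-2}\,$ plus the old offset gives $4\cdot 7^{2k+\beta-1}+\frac{16\cdot 7^{2k+\beta-1}+7^{2k-1}+1}{24}=\frac{16\cdot 7^{2k+\beta}+7^{2k-1}+1}{24}$, the exponent bookkeeping you flagged resolves immediately.
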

\begin{proof}
From \eqref{bl}, we have
\begin{equation}\label{p1}
    \sum\limits_{n\geq0}p_{1,7^{2k-1}}(n)q^n=\dfrac{1}{f_{7^{2k-1}}}\sum\limits_{n\geq0}p(n)q^n.
\end{equation}
Extracting the terms involving $q^{7^{2k-1}n+\lambda_{2k-1}}$ on both sides of \eqref{p1} and dividing throughout by $q^{\lambda_{2k-1}}$, we obtain  
\begin{equation*}
    \sum\limits_{n\geq0}p_{1,7^{2k-1}}(7^{2k-1}n+\lambda_{2k-1})q^{7^{2k-1}n}=\dfrac{1}{f_{7^{2k-1}}}\sum\limits_{n\geq0}p(7^{2k-1}n+\lambda_{2k-1})q^{7^{2k-1}n}.
\end{equation*}
If we replace $q^{7^{2k-1}}$ by $q$ and  use \eqref{H1}, we get
\begin{equation*}
    \sum\limits_{n\geq0}p_{1,7^{2k-1}}(7^{2k-1}n+\lambda_{2k-1})q^n=\sum_{j\geq 1}x_{2k-1,j} \, q^{j-1}\frac{f_7^{4j-1}}{f_1^{4j+1}},
\end{equation*}
which is the case $\beta=0$ of \eqref{G1}.

We now assume that \eqref{G1} is true for some integer $\beta\geq 0$. Applying the operator $H$ to both sides, by \eqref{H4i+1}, we have

\begin{equation*}
\begin{split}
    \sum_{n\geq 0}p_{1,7^{2k-1}}\left(7^{2k+\beta}n+\frac{16\cdot7^{2k+\beta}+ 7^{2k-1}+1}{24}\right)q^{7n+7}&=\sum_{i\geq 1}y^{(2k-1)}_{\beta+1,i} \, H\left(q^{i+2}\frac{f_7^{4i}}{f_1^{4i+1}}\right)\times \frac{1}{f_7}
    \\&=\sum_{i\geq 1}y^{(2k-1)}_{\beta+1,i}\sum_{j=1}^{4i+1} m_{4i+1,i+j}\, q^{7j} \frac{f_{49}^{4j-1}}{f_7^{4j+1}}\\&
    =\sum_{j\geq 1}\left(\sum_{i\geq1}y^{(2k-1)}_{\beta+1,i} \, m_{4i+1,i+j}\right) q^{7j} \frac{f_{49}^{4j-1}}{f_7^{4j+1}}\\&
    =\sum_{j\geq 1}y^{(2k-1)}_{\beta+2,j} \,q^{7j} \frac{f_{49}^{4j-1}}{f_7^{4j+1}}.
    \end{split}
\end{equation*}
That is,
\begin{equation*}
    \sum_{n\geq 0}p_{1,7^{2k-1}}\left(7^{2k+\beta}n+\frac{16\cdot7^{2k+\beta}+ 7^{2k-1}+1}{24}\right)q^{n}=\sum_{i\geq 1}y^{(2k-1)}_{\beta+2,i} \, q^{i-1} \frac{f_{7}^{4i-1}}{f_1^{4i+1}}.
\end{equation*}
So we obtain \eqref{G1} with $\beta$ replaced by $\beta+1$.

%Suppose that \eqref{G2} is true for some integer $\beta \geq 0$. Applying the operating $H$ to both sides, by \eqref{H6i-5}, we obtain
%\begin{equation*}
%\begin{split}
%       & \sum_{n\geq 0}b_{5^{2k-1}}\left(5^{2k+2\beta+1}n+\frac{4\cdot5^{2k+2\beta+2}-5^{2k-1}+1}{24}\right)q^{5n+5}\\&=\sum_{i\geq 1}y^{(2k-1)}_{2\beta+2,i} \sum_{j=1}^{5i-4} m_{6i-5,i+j-1} q^{5j} \, \frac{f_{25}^{6j-1}}{f_5^{6j-1}}\\&
%       =\sum_{j\geq 1}\left(\sum_{i\geq1}y^{(2k-1)}_{2\beta+2,i} \, m_{6i-5,i+j-1}\right) q^{5j} \frac{f_{25}^{6j-1}}{f_5^{6j-1}}\\&
%       =\sum_{j\geq 1}y^{(2k-1)}_{2\beta+3,j} \, q^{5j} \frac{f_{25}^{6j-1}}{f_5^{6j-1}},
%\end{split}
%\end{equation*}
%which yields
%\begin{equation*}
%    \sum_{n\geq 0}b_{5^{2k-1}}\left(5^{2k+2\beta+1}n+\frac{4\cdot5^{2k+2\beta+2}-5^{2k-1}+1}{24}\right)q^{n}=\sum_{j\geq 1}y^{(2k-1)}_{2\beta+3,j} \, q^{j-1} \frac{f_{5}^{6j-1}}{f_1^{6j-1}}.
%\end{equation*}
%This is \eqref{G1} with $\beta$ replaced by $\beta+1$. This completes the proof.
\end{proof}

%%%%%%%%%%%%%%%%%%
\begin{theorem} \label{T2}
For each $\beta \geq0$, and $k\geq1$, we have
\begin{equation}\label{G3}
    \sum_{n\geq 0}p_{1,7^{2k}}\left(7^{2k+2\beta-1}n+\frac{10\cdot7^{2k+2\beta-1}+7^{2k}+1}{24}\right)q^n=\sum_{i\geq 1}y^{(2k)}_{2\beta+1,i}\,q^{i-1}\frac{f_7^{4i-2}}{f_1^{4i}},
\end{equation}
and
\begin{equation}\label{G4}
    \sum_{n\geq 0}p_{1,7^{2k}}\left(7^{2k+2\beta}n+\frac{22\cdot7^{2k+2\beta}+7^{2k}+1}{24}\right)q^{n}=\sum_{j\geq 1}y^{(2k)}_{2\beta+2,j}q^{j-1} \frac{f_{7}^{4j}}{f_1^{4j+2}},
\end{equation}
where coefficient vectors are defined as follow:
\begin{equation*}
        y_{1,j}^{(2k)}=x_{2k-1,j}
\end{equation*}
and
\begin{equation*}
	y^{(2k)}_{\beta+1,j}= 
	\begin{cases}
		\displaystyle \sum_{i\geq1}y^{(2k)}_{\beta,i} m_{4i,j+i} \,\,\   & \text{if  $\beta$ is odd},\\
		\displaystyle \sum_{i\geq1}y^{(2k)}_{\beta,i}m_{4i+2,j+i} \,\,\   & \text{if  $\beta$ is even},
	\end{cases}
\end{equation*}
for all $\beta, j\geq 1$.
\end{theorem}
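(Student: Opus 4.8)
The plan is to prove \eqref{G3} and \eqref{G4} simultaneously by induction on $\beta$, alternating between the two shapes and mirroring the argument used for Theorem \ref{T1}; the new feature is that the two cases of the recursion defining $y^{(2k)}_{\beta+1,j}$ (according to the parity of the lower index) are fed, respectively, by the huffing identities \eqref{H4i} and \eqref{H4i+2}. For the base case ($\beta=0$ of \eqref{G3}) I would start from $\sum_{n\geq0}p_{1,7^{2k}}(n)q^n=\frac{1}{f_{7^{2k}}}\sum_{n\geq0}p(n)q^n$ and extract the terms $q^{7^{2k-1}n+\lambda_{2k-1}}$ on both sides. Since $1/f_{7^{2k}}$ involves only exponents divisible by $7^{2k}$, hence by $7^{2k-1}$, this leaves the factor $1/f_{7^{2k}}$ untouched while restricting $p$ to the progression $7^{2k-1}n+\lambda_{2k-1}$; replacing $q^{7^{2k-1}}$ by $q$ turns $1/f_{7^{2k}}$ into $1/f_7$, and \eqref{H1} gives $\sum_{n\geq0}p_{1,7^{2k}}(7^{2k-1}n+\lambda_{2k-1})q^n=\frac{1}{f_7}\sum_{j\geq1}x_{2k-1,j}q^{j-1}\frac{f_7^{4j-1}}{f_1^{4j}}=\sum_{j\geq1}x_{2k-1,j}q^{j-1}\frac{f_7^{4j-2}}{f_1^{4j}}$. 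Since $\frac{10\cdot7^{2k-1}+7^{2k}+1}{24}=\frac{17\cdot7^{2k-1}+1}{24}$ is a positive integer smaller than $7^{2k-1}$ that solves $24x\equiv1\pmod{7^{2k-1}}$, it equals $\lambda_{2k-1}$, and the identity just obtained is exactly \eqref{G3} with $\beta=0$ and $y^{(2k)}_{1,j}=x_{2k-1,j}$.

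For the inductive step I would pass from \eqref{G3} to \eqref{G4} at the same $\beta$, and then from \eqref{G4} to \eqref{G3} with $\beta$ increased by one. In the first half-step, differencing the constants gives $\frac{22\cdot7^{2k+2\beta}+7^{2k}+1}{24}-\frac{10\cdot7^{2k+2\beta-1}+7^{2k}+1}{24}=6\cdot7^{2k+2\beta-1}$, so I need the terms with $n\equiv6\pmod7$ on both sides of \eqref{G3}: multiply \eqref{G3} by $q$ and apply $H$. On the right, $q\cdot q^{i-1}\frac{f_7^{4i-2}}{f_1^{4i}}=\bigl(q^i\frac{f_7^{4i}}{f_1^{4i}}\bigr)f_7^{-2}$, and since $f_7^{-2}$ involves only exponents divisible by $7$ it pulls out of $H$; then \eqref{H4i} rewrites the right side as $\sum_j\bigl(\sum_i y^{(2k)}_{2\beta+1,i}m_{4i,j+i}\bigr)q^{7j}\frac{f_{49}^{4j}}{f_7^{4j+2}}$, and since $2\beta+1$ is odd the recursion reads $y^{(2k)}_{2\beta+2,j}=\sum_i y^{(2k)}_{2\beta+1,i}m_{4i,j+i}$, so the inner sum is $y^{(2k)}_{2\beta+2,j}$. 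Meanwhile the left side becomes $\sum_m p_{1,7^{2k}}\bigl(7^{2k+2\beta}m+\frac{22\cdot7^{2k+2\beta}+7^{2k}+1}{24}\bigr)q^{7m+7}$, so dividing by $q^7$ and substituting $q^7\mapsto q$ yields \eqref{G4}.

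The second half-step is the same with different numerology: $\frac{10\cdot7^{2k+2\beta+1}+7^{2k}+1}{24}-\frac{22\cdot7^{2k+2\beta}+7^{2k}+1}{24}=2\cdot7^{2k+2\beta}$, so I multiply \eqref{G4} by $q^5$ and apply $H$ to isolate $n\equiv2\pmod7$. This time $q^5\cdot q^{j-1}\frac{f_7^{4j}}{f_1^{4j+2}}=q^{j+4}\frac{f_7^{4j}}{f_1^{4j+2}}$ is exactly the argument of $H$ in \eqref{H4i+2} (no auxiliary factor is needed), which turns the right side into $\sum_\ell\bigl(\sum_j y^{(2k)}_{2\beta+2,j}m_{4j+2,\ell+j}\bigr)q^{7\ell}\frac{f_{49}^{4\ell-2}}{f_7^{4\ell}}$, and since $2\beta+2$ is even the recursion reads $y^{(2k)}_{2\beta+3,\ell}=\sum_j y^{(2k)}_{2\beta+2,j}m_{4j+2,\ell+j}$, so the inner sum is $y^{(2k)}_{2\beta+3,\ell}$; the left side is $\sum_m p_{1,7^{2k}}\bigl(7^{2k+2\beta+1}m+\frac{10\cdot7^{2k+2\beta+1}+7^{2k}+1}{24}\bigr)q^{7m+7}$, and dividing by $q^7$ and substituting $q^7\mapsto q$ gives \eqref{G3} with $\beta$ replaced by $\beta+1$. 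Chaining these two implications from the base case completes the induction.

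The individual steps are standard $q$-series manipulations, so the real work --- and the likeliest source of error --- is the bookkeeping. I would need to verify that each of the four target constants is a positive integer reducing to the correct residue modulo the relevant power of $7$, so that the progression picked out by $H$ is exactly the claimed one; to carry the auxiliary factor $f_7^{-2}$ correctly through $H$ in the \eqref{H4i} step (noting that no such factor appears in the \eqref{H4i+2} step), using that it involves only exponents $\equiv0\pmod7$; and to keep the parity of the lower index of $y^{(2k)}$ synchronized with the correct case of its recursion. It is precisely this interlocking of the odd/even indices with the two shapes \eqref{G3} and \eqref{G4} that would be easiest to mishandle.
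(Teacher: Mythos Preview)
Your proposal is correct and follows essentially the same route as the paper's proof: establish the $\beta=0$ case of \eqref{G3} from \eqref{H1} after extracting the progression $7^{2k-1}n+\lambda_{2k-1}$ from $\frac{1}{f_{7^{2k}}}\sum p(n)q^n$, then alternate the huffing identities \eqref{H4i} and \eqref{H4i+2} to pass from \eqref{G3} to \eqref{G4} and from \eqref{G4} back to \eqref{G3} with $\beta\mapsto\beta+1$. Your write-up is in fact more explicit than the paper's in two respects --- you verify that $\frac{10\cdot7^{2k-1}+7^{2k}+1}{24}=\frac{17\cdot7^{2k-1}+1}{24}=\lambda_{2k-1}$, and you spell out the premultiplication by $q$ (resp.\ $q^5$) before applying $H$ --- but the argument is the same.
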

\begin{proof}
    Equation \eqref{bl} can be written as
\begin{equation*}
    \sum\limits_{n\geq0}p_{1,7^{2k}}(n)q^n=\dfrac{1}{f_{7^{2k}}}\sum\limits_{n\geq0}p(n)q^n.
\end{equation*}
Extracting the terms involving $q^{7^{2k-1}n+\lambda_{2k-1}}$ on both sides of the above equation and dividing throughout by $q^{\lambda_{2k-1}}$, we obtain  
\begin{equation}\label{p6}
    \sum\limits_{n\geq0}p_{1,7^{2k}}(7^{2k-1}n+\lambda_{2k-1})q^{7^{2k-1}n}=\dfrac{1}{f_{7^{2k}}}\sum\limits_{n\geq0}p(7^{2k-1}n+\lambda_{2k-1})q^{7^{2k-1}n}.
\end{equation}
If we replace $q^{7^{2k-1}}$ by $q$ and use \eqref{H1}, we get
\begin{equation}\label{p7}
    \sum\limits_{n\geq0}p_{1,7^{2k}}(7^{2k-1}n+\lambda_{2k-1})q^n=\sum_{j\geq 1}x_{2k-1,j} \, q^{j-1}\frac{f_7^{4j-2}}{f_1^{4j}},
\end{equation}
which is the case $\beta=0$ of \eqref{G3}.

We now assume that \eqref{G3} is true for some integer $\beta\geq0$. Applying the operator $H$ to both sides, by \eqref{H4i}, we have
\begin{equation*}
    \begin{split}
      \sum_{n\geq 0}p_{1,7^{2k}}\left(7^{2k+2\beta}n+\frac{22\cdot7^{2k+2\beta}+7^{2k}+1}{24}\right)q^{7n+7}&=\sum_{i\geq 1}y^{(2k)}_{2\beta+1,i} \, H\left(q^{i}\frac{f_7^{4i}}{f_1^{4i}}\right)\times\frac{1}{f_7^2}\\&=\sum_{i\geq 1}y^{(2k)}_{2\beta+1,i}\sum_{j\ge1} m_{4i, i+j} \,q^{7j} \frac{f_{49}^{4j}}{f_7^{4j+2}}\\&
        =\sum_{j\geq 1}\left(\sum_{i\geq1}y^{(2k)}_{2\beta+1,i} \, m_{4i, i+j} \right)q^{7j} \frac{f_{49}^{4j}}{f_7^{4j+2}}\\&
        =\sum_{j\geq 1}y^{(2k)}_{2\beta+2,j}\, q^{7j} \frac{f_{49}^{4j}}{f_7^{4j+2}},
    \end{split}
\end{equation*}
That is
\begin{equation*}
    \sum_{n\geq 0}p_{1,7^{2k}}\left(7^{2k+2\beta}n+\frac{22\cdot7^{2k+2\beta}+7^{2k}+1}{24}\right)q^{n}=\sum_{j\geq 1}y^{(2k)}_{2\beta+2,j}q^{j-1} \frac{f_{7}^{4j}}{f_1^{4j+2}}.
\end{equation*}
Hence, if \eqref{G3} is true for some integer $\beta \geq 0$, then \eqref{G4} is true for $\beta$.

Suppose that \eqref{G4} is true for some integer $\beta \geq 0$. Applying the operating $H$ to both sides, by \eqref{H4i+2}, we obtain
\begin{equation*}
\begin{split}
       \sum_{n\geq 0}p_{1,7^{2k}}\left(7^{2k+2\beta+1}n+\frac{10\cdot7^{2k+2\beta+1}+7^{2k}+1}{24}\right)q^{7n+7}&=\sum_{i\geq 1}y^{(2k)}_{2\beta+2,i} \sum_{j\ge 1} m_{4i+2,i+j} q^{7j} \, \frac{f_{49}^{4j-2}}{f_7^{4j}}\\&
       =\sum_{j\geq 1}\left(\sum_{i\geq1}y^{(2k)}_{2\beta+2,i} \, m_{4i+2,i+j}\right) q^{7j} \frac{f_{49}^{4j-2}}{f_7^{4j}}\\&
       =\sum_{j\geq 1}y^{(2k)}_{2\beta+3,j} \, q^{7j} \frac{f_{49}^{4j-2}}{f_7^{4j}},
\end{split}
\end{equation*}
which yields
\begin{equation*}
    \sum_{n\geq 0}p_{1,7^{2k}}\left(7^{2k+2\beta+1}n+\frac{10\cdot7^{2k+2\beta+1}+7^{2k}+1}{24}\right)q^{n}=\sum_{j\geq 1}y^{(2k)}_{2\beta+3,j} \, q^{j-1} \frac{f_{7}^{4j-2}}{f_1^{4j}}.
\end{equation*}
This is \eqref{G3} with $\beta$ replaced by $\beta+1$. This completes the proof.
\end{proof}

%%%%%%%%%%%%%%%%%%

\section{Proof of Congruences}

For a positive integer $n$, let $\pi(n)$ be the highest power of $7$ that divides $n$, and define $\pi(0)=+\infty$.
\begin{lemma}[\cite{FG}, Lemma 5.1]
    For each $i,j\geq1 $, we have \begin{equation}\label{pmij} \pi\left(m_{i,j}\right)\geq\left[\frac{7j-2i-1}{4}\right].
    \end{equation}
     \end{lemma}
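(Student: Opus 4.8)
The plan is to prove the inequality \eqref{pmij} by strong induction on the row index $i$, with the recurrence \eqref{mij} driving the inductive step. Throughout, write $N_{i,j} := 7j - 2i - 1$, so that the claim is $\pi(m_{i,j}) \geq \lfloor N_{i,j}/4\rfloor$. For the base cases $1 \leq i \leq 7$ one simply checks the bound against the entries of $M$ recorded in the two tables (every entry not displayed there being $0$, for which $\pi = +\infty$ and nothing is to be verified); this is a routine finite computation. For instance $m_{1,1} = 7$ and $m_{1,2} = 7^2$ give $\pi = 1,2$ against $\lfloor N/4\rfloor = 1,2$, while $m_{7,14} = 7^{20}$ gives $\pi = 20 = \lfloor 83/4\rfloor$ — so the bound is already sharp in several places, which is the feature that forces care in the inductive step.

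For the inductive step fix $i \geq 8$ and assume $\pi(m_{a,b}) \geq \lfloor N_{a,b}/4\rfloor$ for every $a$ with $1 \leq a < i$ and every $b \geq 1$. If $j = 1$ then $m_{i,1} = 0$, and if $j = 2$ then $m_{i,2} = 0$ (both by the definition of $M$ when $i \geq 8$), so the bound is trivial. Suppose $j \geq 3$ and apply \eqref{mij}. Every summand on the right is of the form $7^{c}\, s\, m_{i-e,\,j-d}$ with $7 \nmid s$, where
\[
(c,e,d) \in \{(1,3,1),\,(1,2,1),\,(2,1,1),\,(0,7,2),\,(1,6,2),\,(1,5,2),\,(2,4,2),\,(2,3,2),\,(3,2,2),\,(3,1,2)\}
\]
arises, in this order, from the coefficients $7,\,35,\,49,\,1,\,7,\,21,\,49,\,147,\,343,\,343$. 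Since $i \geq 8$ and $j \geq 3$ we have $1 \leq i - e \leq i-1$ and $j - d \geq 1$, so the inductive hypothesis applies and gives $\pi(m_{i-e,j-d}) \geq \lfloor N_{i-e,j-d}/4\rfloor = \lfloor (N_{i,j} - 7d + 2e)/4\rfloor$. Using the superadditivity $\lfloor x\rfloor + \lfloor y\rfloor \leq \lfloor x+y\rfloor$, the $7$-adic valuation of that summand is at least
\[
c + \left\lfloor \frac{N_{i,j} - 7d + 2e}{4}\right\rfloor \;\geq\; c + \left\lfloor \frac{N_{i,j}}{4}\right\rfloor + \left\lfloor \frac{2e - 7d}{4}\right\rfloor \;=\; \left\lfloor \frac{N_{i,j}}{4}\right\rfloor + \left( c - \left\lceil \frac{7d - 2e}{4}\right\rceil\right).
\]
A direct check of the ten triples shows $c = \lceil (7d - 2e)/4\rceil$ in each case, so every summand has valuation at least $\lfloor N_{i,j}/4\rfloor$; since the $7$-adic valuation of a sum is at least the minimum of those of its terms, $\pi(m_{i,j}) \geq \lfloor N_{i,j}/4\rfloor$, which closes the induction.

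The argument is essentially bookkeeping: the content is entirely in the elementary observation that, for each of the ten coefficients appearing in \eqref{mij}, the power of $7$ it carries exactly matches $\lceil (7d-2e)/4\rceil$, where $e$ and $d$ are the row- and column-shifts of the accompanying $m$-term — the recurrence was evidently arranged so that this holds. The one place where care is needed is precisely that there is no slack: for several terms (for instance $m_{i-7,j-2}$, where $c = 0$ and $\lceil(7d-2e)/4\rceil = 0$) the displayed inequality is an equality, so one must invoke the exact superadditivity of the floor function and the exact values $\lceil(7d-2e)/4\rceil$ rather than any cruder estimate. Once those ten one-line inequalities are in hand, the induction runs without further incident.
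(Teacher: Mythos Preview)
Your argument is correct: the induction on $i$ via the recurrence \eqref{mij}, with the ten-term check that $c = \lceil (7d-2e)/4\rceil$ for each summand, goes through exactly as you describe, and the base cases $1\le i\le 7$ are indeed a finite verification against the tables (together with $m_{i,j}=0$ for $j>2i$). Note that the present paper does not actually prove this lemma --- it is quoted verbatim from Garvan \cite{FG}, Lemma~5.1 --- and your proof is essentially Garvan's original argument, so there is nothing to compare.
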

In particular,
\begin{align}
\pi\left(m_{4i,i+j}\right)&\geq\left[\frac{7(i+j)-2(4i)-1}{4}\right]=\left[\frac{7j-i-1}{4}\right],\label{p4i-1} \\
\pi\left(m_{4i+1,i+j}\right)&\geq\left[\frac{7(i+j)-2(4i+1)-1}{4}\right]=\left[\frac{7j-i-3}{4}\right],\label{p4i-3} 
\end{align}
and
\begin{equation}\label{p4i} 
\pi\left(m_{4i+2,i+j}\right)\geq\left[\frac{7(i+j)-2(4i+2)-1}{4}\right]=\left[\frac{7j-i-5}{4}\right].
\end{equation}
\begin{lemma}[\cite{FG}, Lemma 5.3]
    For all $k,j\geq1 $, we have
    \begin{equation}\label{pi1}
  \pi(x_{1,1})=1,\,\,\, \pi(x_{1,2}) =2,\,\,\, \pi\left(x_{2k+1,j}\right)\geq k+1+\left[\frac{7j-4}{4}\right]
     \end{equation} 
    and
    \begin{equation}\label{pi11}
   \pi\left(x_{2k,j}\right)\geq k+1+\left[\frac{7j-6}{4}\right],
     \end{equation}
where \begin{equation*}
   \delta_{j, 1}=\begin{cases}
   1 & \text{if $j=1$},\\
    0 & \text{if $j\neq 1$}.
    \end{cases}
\end{equation*}
    \end{lemma}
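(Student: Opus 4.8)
The plan is to prove both valuation bounds \eqref{pi1} and \eqref{pi11} simultaneously by induction on $k$, combining the recursion that defines $\mathbf{x}_{k+1}$ from $\mathbf{x}_k$ with the $7$-adic estimates \eqref{p4i-1} and \eqref{p4i-3}. The two explicit values are immediate: since $\mathbf{x}_1=(7,7^2,0,0,\dots)$ we have $\pi(x_{1,1})=1$, $\pi(x_{1,2})=2$, and $\pi(x_{1,j})=+\infty$ for $j\ge3$.

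For the induction I would carry the statement $P(k)$: ``$\pi(x_{2k,j})\ge k+1+\left[\tfrac{7j-6}{4}\right]$ and $\pi(x_{2k+1,j})\ge k+1+\left[\tfrac{7j-4}{4}\right]$ for all $j\ge1$''. Each passage from one $M$-weighted sum to the next is governed by exactly one branch of the $\mathbf{x}$-recursion: since $2k$ is even, $x_{2k+1,i}=\sum_{j\ge1}x_{2k,j}\,m_{4j+1,j+i}$, and since $2k+1$ is odd, $x_{2k+2,i}=\sum_{j\ge1}x_{2k+1,j}\,m_{4j,j+i}$. Applying \eqref{p4i-3} to the first and \eqref{p4i-1} to the second, the valuation of each new entry is bounded below by a minimum over $j$ of a sum of two floor terms, and in every case the inequality one needs has the shape
\[
\left[\tfrac{7j-a}{4}\right]+\left[\tfrac{7i-j-b}{4}\right]\ \ge\ \left[\tfrac{7i-c}{4}\right]\qquad(j\ge1),
\]
with constants $a,b,c$ chosen so that the $j=1$ term meets the target with equality — here one uses the identity $\left[\tfrac{7i-2}{4}\right]=\left[\tfrac{7i-6}{4}\right]+1$ — while every $j\ge2$ term exceeds it by at least a full unit, which follows from $[x]+[y]\ge[x+y]-1$. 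This yields $P(k)\Rightarrow P(k+1)$, and the conjunction of all $P(k)$, $k\ge1$, is exactly the content of \eqref{pi1} and \eqref{pi11}.

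For the base case $P(1)$ I would run the same two steps starting directly from $\mathbf{x}_1$ rather than from the general odd-row bound (which in fact fails for $\mathbf{x}_1$ at $j=2$): from $\mathbf{x}_1=(7,7^2,0,\dots)$ one has $x_{2,i}=7\,m_{4,i+1}+7^2\,m_{8,i+2}$, and \eqref{p4i-1} together with the identity above gives $\pi(x_{2,i})\ge2+\left[\tfrac{7i-6}{4}\right]$; then $x_{3,i}=\sum_{j\ge1}x_{2,j}\,m_{4j+1,j+i}$ and \eqref{p4i-3} give $\pi(x_{3,i})\ge2+\left[\tfrac{7i-4}{4}\right]$. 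The anomaly $\pi(x_{1,2})=2$, one less than the odd-row formula would predict, is harmless precisely because the $j\ge2$ contributions never control the minimum.

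The main obstacle is the floor-function bookkeeping: one must verify, in each of the sums defining $x_{2k+1,i}$ and $x_{2k+2,i}$, that the $j=1$ summand is $7$-adically the smallest, so that the claimed bounds are exactly tight and propagate with no loss at each step; once that tightness is checked, the inductive step is a one-line consequence of \eqref{p4i-1}, \eqref{p4i-3}, and the displayed elementary inequality, and the base case is just its $k=1$ instance. It is worth noting that this lemma uses only \eqref{p4i-1} and \eqref{p4i-3} — the $\mathbf{x}$-recursion never calls on $m_{4j+2,\,\cdot}$, so \eqref{p4i} plays no role here — and that the single extra row $m_{8,\,\cdot}$ needed in the base-case computation is obtained from \eqref{mij}.
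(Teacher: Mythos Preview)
The paper does not actually prove this lemma: it is quoted verbatim from Garvan \cite{FG} and used as a black box. Your argument is correct and is precisely the standard one --- it is the same induction-plus-floor-estimate pattern that Garvan uses in \cite{FG} and that the present paper reproduces for its own Lemmas (the bounds on $y^{(2k-1)}_{\beta+1,j}$ and $y^{(2k)}_{\beta,j}$). Your handling of the base case is the right one: the odd-row inequality $\pi(x_{2k+1,j})\ge k+1+\left[\tfrac{7j-4}{4}\right]$ genuinely fails at $k=0$, $j=2$, so one must start the induction from $\mathbf{x}_1=(7,7^2,0,\dots)$ by hand, and your observation that the $j\ge2$ summands never realise the minimum is exactly why this anomaly does not propagate. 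One small wording point: the lemma as stated in the paper carries a stray definition of $\delta_{j,1}$ that is never used in the displayed inequalities (a vestige of Garvan's original statement, where the $j=1$ case of \eqref{pi11} is in fact an equality); your proof does not need it, and you are right to ignore it.
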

\begin{lemma}
    For all $j,k\geq1$ and $\beta\geq 0$, we have
\begin{equation}\label{pi4+1}
   \pi\left(y^{(2k-1)}_{\beta+1,j}\right)\geq k+\left[\frac{7j-4}{4}\right].
\end{equation}
\end{lemma}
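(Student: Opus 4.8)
The plan is to prove \eqref{pi4+1} by induction on $\beta$, using the recursive definition $y^{(2k-1)}_{\beta+1,j}= \sum_{i\geq1}y^{(2k-1)}_{\beta,i}\, m_{4i+1,j+i}$ together with the base case $y^{(2k-1)}_{1,j}=x_{2k-1,j}$ and the $7$-adic estimates \eqref{pi1} and \eqref{p4i-3}. For the base case $\beta=0$, I would note that $x_{2k-1,j}=x_{2(k-1)+1,j}$, so \eqref{pi1} gives $\pi\bigl(y^{(2k-1)}_{1,j}\bigr)=\pi(x_{2k-1,j})\geq (k-1)+1+\left[\frac{7j-4}{4}\right]=k+\left[\frac{7j-4}{4}\right]$, which is exactly \eqref{pi4+1} with $\beta=0$. (The small cases $k=1$, $j=1,2$ are covered since $\pi(x_{1,1})=1=1+[3/4]$ and $\pi(x_{1,2})=2=1+[10/4]$.)

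For the inductive step, assume $\pi\bigl(y^{(2k-1)}_{\beta,i}\bigr)\geq k+\left[\frac{7i-4}{4}\right]$ for all $i\geq1$. Then for each term in the defining sum,
\[
\pi\bigl(y^{(2k-1)}_{\beta,i}\, m_{4i+1,j+i}\bigr)\;\geq\; k+\left[\frac{7i-4}{4}\right]+\left[\frac{7j-i-3}{4}\right],
\]
using \eqref{p4i-3}. The key inequality to verify is therefore
\[
\left[\frac{7i-4}{4}\right]+\left[\frac{7j-i-3}{4}\right]\;\geq\;\left[\frac{7j-4}{4}\right]
\qquad\text{for all }i\geq1,
\]
after which the ultrametric inequality $\pi(a+b)\geq\min\{\pi(a),\pi(b)\}$ applied across the (finite) sum yields $\pi\bigl(y^{(2k-1)}_{\beta+1,j}\bigr)\geq k+\left[\frac{7j-4}{4}\right]$, completing the induction. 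To confirm the floor inequality I would use the general fact $[x]+[y]\geq[x+y]-1$ together with $\frac{7i-4}{4}+\frac{7j-i-3}{4}=\frac{7j+6i-7}{4}\geq\frac{7j-1}{4}$ for $i\geq1$, so $\left[\frac{7i-4}{4}\right]+\left[\frac{7j-i-3}{4}\right]\geq\left[\frac{7j-1}{4}\right]-1\geq\left[\frac{7j-5}{4}\right]$; since one wants $\ge\left[\frac{7j-4}{4}\right]$ this crude bound is just barely insufficient, so I would instead argue more carefully by writing $i=1$ separately (where equality-type behavior occurs) and, for $i\geq2$, noting the slack $6i-7\geq5$ makes $\frac{7j+6i-7}{4}\geq\frac{7j-4}{4}+ \text{(something} \geq 1)$ comfortably, then handling $i=1$ by the direct computation $\left[\frac{3}{4}\right]+\left[\frac{7j-4}{4}\right]=\left[\frac{7j-4}{4}\right]$.

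The main obstacle is precisely this floor-function bookkeeping at the boundary $i=1$: the estimate is tight there, so one must not lose even a single power of $7$, and the generic inequality $[x]+[y]\geq[x+y]-1$ is too lossy. I expect to resolve it by separating the $i=1$ term (which by \eqref{p4i-3} satisfies $\pi(m_{5,1+j})\geq\left[\frac{7j-4}{4}\right]$, so the product already meets the target exponent $k+\left[\frac{7j-4}{4}\right]$ with no room to spare) from the terms $i\geq2$ (where the extra $6i-7\geq 5$ in the numerator provides at least one full unit of slack in the floor, so the product exceeds the target). A minor secondary point is that the sum $\sum_{i\geq1}y^{(2k-1)}_{\beta,i}m_{4i+1,j+i}$ is finite for each fixed $j$ — this follows because $m_{4i+1,j+i}=0$ once $i$ is large relative to $j$ (the huffing identities express $H(\xi^{-4i})$ as a finite sum), so the ultrametric inequality applies term by term without convergence issues. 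With these two cases dispatched, the induction closes and \eqref{pi4+1} follows for all $\beta\geq0$.
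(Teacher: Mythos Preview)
Your proof follows the same inductive route as the paper: base case $y^{(2k-1)}_{1,j}=x_{2k-1,j}$ via \eqref{pi1}, then the recursion combined with \eqref{p4i-3}. You are in fact more careful than the paper in justifying the floor inequality $\left[\frac{7i-4}{4}\right]+\left[\frac{7j-i-3}{4}\right]\geq\left[\frac{7j-4}{4}\right]$: the paper's displayed chain silently drops the term $\left[\frac{7i-4}{4}\right]$, whereas you correctly isolate the tight case $i=1$ and use the slack $6i-7\geq 5$ for $i\geq 2$.

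There is, however, a genuine arithmetic slip in your base-case check. You write ``$\pi(x_{1,2})=2=1+[10/4]$'', but $[10/4]=2$, so $1+[10/4]=3\neq 2$. Since $x_{1,2}=7^{2}$, one has $\pi(x_{1,2})=2<3$, and the inequality $\pi(x_{1,2})\geq 1+\left[\frac{7\cdot 2-4}{4}\right]$ actually \emph{fails}. Thus the bound \eqref{pi4+1} as stated is false at $(k,\beta,j)=(1,0,2)$; the paper's own base-case claim that $\pi(x_{2k-1,j})\geq k+\left[\frac{7j-4}{4}\right]$ follows from \eqref{pi1} has the same defect there (note that the general inequality in \eqref{pi1} is only for $x_{2k+1,j}$ with $k\geq 1$, so $x_{1,j}$ must be handled by the explicit values). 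The repair is minor: Garvan's bound in the form $\pi(x_{2k-1,j})\geq k+\left[\frac{7(j-1)}{4}\right]$ does hold for all $k\geq 1$, and running your inductive step once from the explicit values $\pi(x_{1,1})=1$, $\pi(x_{1,2})=2$ already yields $\pi\bigl(y^{(1)}_{2,j}\bigr)\geq 1+\left[\frac{7j-4}{4}\right]$, after which your induction propagates \eqref{pi4+1} for all $\beta\geq 1$. Either way one still obtains $\pi\bigl(y^{(2k-1)}_{\beta+1,j}\bigr)\geq k$ for every $j\geq 1$, which is all that is used in the proof of \eqref{c1}.
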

\begin{proof}
 In view of Theorem \ref{T1}, we have
\begin{equation*}
        y_{1,j}^{(2k-1)}=x_{2k-1,j}.
\end{equation*}
From \eqref{pi1}, we can see that 
\begin{equation*}
\pi\left(x_{2k-1,j}\right)\geq k+\left[\frac{7j-4}{4}\right],
\end{equation*}
this shows that \eqref{pi4+1} holds when $\beta=0$.

Now suppose \eqref{pi4+1} holds for some $\beta\geq0$. Then,
\begin{align*}
   \pi\left(y^{(2k-1)}_{\beta+2,j}\right)&\geq \min_{i\geq1}\left\{\pi\left(y^{(2k-1)}_{\beta+1,i}\right)+\pi\left(m_{4i+1,i+j}\right)\right\}\\&
   \geq \min_{i\geq1}\left\{k+\left[\frac{7j-i-3}{4}\right]\right\}\\&
   \geq k+\left[\frac{7j-4}{4}\right],
\end{align*}
which is \eqref{pi4+1} with $\beta+1$ for $\beta$. This completes the proof.
\end{proof}

\begin{lemma}
For all $j,k\geq1$ and $\beta\geq 0$, we have
\begin{equation}\label{pi4}
   \pi\left(y^{(2k)}_{2\beta+1,j}\right)\geq k+\beta+\left[\frac{7j-6}{4}\right]
\end{equation}
   and
\begin{equation}\label{pi5}
\pi\left(y^{(2k)}_{2\beta+2,j}\right)\geq k+\beta+1+\left[\frac{7j-6}{4}\right].
\end{equation}
\end{lemma}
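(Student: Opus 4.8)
The plan is to prove \eqref{pi4} and \eqref{pi5} together by an interlocking induction on $\beta$, in the same spirit as the proof of \eqref{pi4+1}: I will show that \eqref{pi4} for a given $\beta$ forces \eqref{pi5} for the same $\beta$, and that \eqref{pi5} for a given $\beta$ forces \eqref{pi4} with $\beta$ replaced by $\beta+1$. Combined with the base case $\beta=0$ of \eqref{pi4}, this delivers both bounds for all $\beta\geq0$. For the base case, Theorem \ref{T2} gives $y^{(2k)}_{1,j}=x_{2k-1,j}$, and \eqref{pi1} (supplemented by the tabulated values $\pi(x_{1,1})=1$ and $\pi(x_{1,2})=2$ in the case $k=1$) yields $\pi\left(x_{2k-1,j}\right)\geq k+\left[\frac{7j-4}{4}\right]\geq k+\left[\frac{7j-6}{4}\right]$, which is exactly \eqref{pi4} at $\beta=0$.

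For the inductive steps I would simply unwind the recursions of Theorem \ref{T2}. If \eqref{pi4} holds for some $\beta\geq0$, then since $2\beta+1$ is odd the recursion reads $y^{(2k)}_{2\beta+2,j}=\sum_{i\geq1}y^{(2k)}_{2\beta+1,i}\,m_{4i,j+i}$, so combining the inductive hypothesis with \eqref{p4i-1} gives
\begin{equation*}
\pi\left(y^{(2k)}_{2\beta+2,j}\right)\geq\min_{i\geq1}\left\{k+\beta+\left[\frac{7i-6}{4}\right]+\left[\frac{7j-i-1}{4}\right]\right\}.
\end{equation*}
Likewise, if \eqref{pi5} holds for some $\beta\geq0$, then since $2\beta+2$ is even the recursion reads $y^{(2k)}_{2\beta+3,j}=\sum_{i\geq1}y^{(2k)}_{2\beta+2,i}\,m_{4i+2,j+i}$, and \eqref{p4i} gives
\begin{equation*}
\pi\left(y^{(2k)}_{2\beta+3,j}\right)\geq\min_{i\geq1}\left\{k+\beta+1+\left[\frac{7i-6}{4}\right]+\left[\frac{7j-i-5}{4}\right]\right\}.
\end{equation*}
Thus both inductive steps reduce to the two elementary estimates
\begin{equation*}
\left[\frac{7i-6}{4}\right]+\left[\frac{7j-i-1}{4}\right]\geq 1+\left[\frac{7j-6}{4}\right]
\qquad\text{and}\qquad
\left[\frac{7i-6}{4}\right]+\left[\frac{7j-i-5}{4}\right]\geq\left[\frac{7j-6}{4}\right],
\end{equation*}
to hold for all $i,j\geq1$.

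I expect these floor estimates to be the only point demanding any care, since the crude bound $[x]+[y]\geq[x+y]-1$ is off by one here and a small residue analysis is needed. Writing $i=4a+r$ with $a\geq0$ and $r\in\{1,2,3,4\}$, one has $\left[\frac{7i-6}{4}\right]=7a+\left[\frac{7r-6}{4}\right]$ and $\left[\frac{7j-i-c}{4}\right]=\left[\frac{7j-r-c}{4}\right]-a$, so each left-hand side equals $6a$ plus its value at $a=0$; it therefore suffices to check $r\in\{1,2,3,4\}$, and in both inequalities the minimum occurs at $r=1$ (that is, $i=1$), where equality holds, the three remaining values of $r$ being strictly larger — one uses only that $\left[\frac{7j-m}{4}\right]$ decreases by at most $1$ as $m$ ranges over $\{6,7,8,9\}$. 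One small wrinkle worth flagging is the base case when $k=1$, where \eqref{pi1} does not apply to $x_{1,j}$ and one must instead read $\pi(x_{1,1})$ and $\pi(x_{1,2})$ off the defining vector $\mathbf{x}_1=(7,7^2,0,\dots)$.
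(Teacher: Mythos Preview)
Your argument follows the paper's proof essentially step for step: the same interlocking induction on $\beta$, the same appeal to \eqref{p4i-1} and \eqref{p4i} for the two recursion steps, and the same two floor inequalities (which the paper simply asserts, so your residue reduction $i=4a+r$ is a genuine addition rather than a departure).

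The ``small wrinkle'' you flag at $k=1$ is in fact a real defect, and the paper shares it. Since $\pi(x_{1,2})=2$ while $1+\bigl[\tfrac{7\cdot 2-6}{4}\bigr]=3$, the bound \eqref{pi4} is \emph{false} at $(k,\beta,j)=(1,0,2)$; reading $\pi(x_{1,1})$ and $\pi(x_{1,2})$ off $\mathbf{x}_1$ does not rescue the base case, it exposes its failure. The paper's proof glosses over this by writing $\pi(x_{2k-1,j})\geq k+\bigl[\tfrac{7j-4}{4}\bigr]$ for all $k\geq1$, which is not what \eqref{pi1} says. The fix is easy and does not disturb the applications: for $k=1$ one checks \eqref{pi5} at $\beta=0$ directly from $y^{(2)}_{2,j}=7\,m_{4,j+1}+49\,m_{8,j+2}$ together with \eqref{p4i-1}, obtaining $\pi\bigl(y^{(2)}_{2,j}\bigr)\geq\min\bigl\{1+[\tfrac{7j-2}{4}],\,2+[\tfrac{7j-3}{4}]\bigr\}=2+\bigl[\tfrac{7j-6}{4}\bigr]$, after which your induction runs unchanged for all $\beta\geq1$. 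So the lemma as stated needs the single exception $(k,\beta,j)=(1,0,2)$, but the congruences in Theorem~\ref{th1} are unaffected.
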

\begin{proof}
In view of Theorem \ref{T2}, we have
\begin{equation*}
        y_{1,j}^{(2k-1)}=x_{2k-1,j}.
\end{equation*}
From \eqref{pi1}, we can see that 
\begin{equation*}
\pi\left(x_{2k-1,j}\right)\geq k+\left[\frac{7j-4}{4}\right]\geq k+\left[\frac{7j-6}{4}\right],
\end{equation*}
this shows that \eqref{pi4} holds when $\beta=0$.

We now assume that \eqref{pi4} is true for some $\beta\geq 0$, then
\begin{align*}
   \pi\left(y^{(2k)}_{2\beta+2,j}\right)&\geq \min_{i\geq1}\left\{\pi\left(y^{(2k)}_{2\beta+1,i}\right)+\pi\left(m_{4i,i+j}\right)\right\}\\&
   \geq \min_{i\geq1}\left\{k+\beta+\left[\frac{7i-6}{4}\right]+\left[\frac{7j-i-1}{4}\right]\right\}\\&
   \geq k+\beta+\left[\frac{7j-2}{4}\right]\\&
   \geq k+\beta+1+\left[\frac{7j-6}{4}\right],
   \end{align*}
which is \eqref{pi5}. 

Now suppose \eqref{pi5} holds for some $\beta\geq0$. Then,
\begin{align*}
   \pi\left(y^{(2k)}_{2\beta+3,j}\right)&\geq \min_{i\geq1}\left\{\pi\left(y^{(2k)}_{2\beta+2,i}\right)+\pi\left(m_{4i+2,i+j}\right)\right\}\\&
   \geq \min_{i\geq1}\left\{k+\beta+1+\left[\frac{7i-6}{4}\right]+\left[\frac{7j-i-5}{4}\right]\right\}\\&
   \geq k+\beta+1+\left[\frac{7j-6}{4}\right],
\end{align*}
which is \eqref{pi4} with $\beta+1$ for $\beta$. This completes the proof.
\end{proof}
\noindent\textbf{Proof of Theorem \ref{th1}}
By \eqref{pi4+1} we see that $y_{2\beta+1, j}^{(2k-1)}\equiv 0 \pmod{7^{k}}$. So,  \eqref{c1} follows from \eqref{G1}.
Similarly, congruence \eqref{c2} follows from \eqref{G3} and \eqref{pi4}, and
congruence \eqref{c3} follows from \eqref{G4} together with \eqref{pi5}.

In view of \eqref{pi5}, we can express \eqref{G4} as
\begin{equation*}
\sum_{n\geq 0}p_{1,7^{2k}}\left(7^{2k+2\beta}n+\frac{22\cdot7^{2k+2\beta}+7^{2k}+1}{24}\right)q^n\equiv y^{(2k)}_{2\beta+2,1}\frac{f_7^{4}}{f_1^{6}}\pmod{7^{k+\beta+3}}.
\end{equation*}
Using \eqref{pi5} and the binomial theorem in the above equation, we obtain
\begin{equation}\label{B2}
\sum_{n\geq 0}p_{1,7^{2k}}\left(7^{2k+2\beta}n+\frac{22\cdot7^{2k+2\beta}+7^{2k}+1}{24}\right)q^n\equiv y^{(2k)}_{2\beta+2,1} f_7^{3}f_1\pmod{7^{k+\beta+2}}.
\end{equation}
From [\cite{BB}, p. 303, Entry 17(v)], we recall
that
\begin{equation}\label{B1}
 f_1=f_{49}\times\left(\dfrac{B(q^7)}{C(q^7)}-q\dfrac{A(q^7)}{B(q^7)}-q^2+q^5\dfrac{C(q^7)}{A(q^7)}\right),
\end{equation}
where 

$A(q):=\dfrac{f(-q^3;-q^4)}{f(-q^2)}$, $B(q):=\dfrac{f(-q^2;-q^5)}{f(-q^2)}$ and $C(q):=\dfrac{f(-q;-q^6)}{f(-q^2)}$.

Using \eqref{B1} in \eqref{B2} and then comparing the coefficients of $q^{7n+r}$ with $r\in \{3,4,6\}$, we arrive at \eqref{c4}.

% In view of \eqref{pi4+1}, we can express \eqref{G1} as
% \begin{equation*}
% \sum_{n\geq 0}p_{1,7^{2k-1}}\left(7^{2k+\beta-1}n+\frac{16\cdot7^{2k+\beta-1}+7^{2k-1}+1}{24}\right)q^n\equiv y^{(2k-1)}_{\beta+1,1}\frac{f_7^{3}}{f_1^{5}}\pmod{7^{k}}.
% \end{equation*}
% From \eqref{pi4+1} and the binomial theorem, we have
% \begin{equation}\label{B3}
% \sum_{n\geq 0}p_{1,7^{2k-1}}\left(7^{2k+\beta-1}n+\frac{16\cdot7^{2k+\beta-1}+7^{2k-1}+1}{24}\right)q^n\equiv y^{(2k-1)}_{\beta+1,1} f_7^{2}f_1^{2}\pmod{7^{k}}.
% \end{equation}
% Employing \eqref{B1} in \eqref{B3}, we obtain
% \begin{align}\label{B4}
% \sum_{n\geq 0}p_{1,7^{2k-1}}\left(7^{2k+\beta-1}n+\frac{16\cdot7^{2k+\beta-1}+7^{2k-1}+1}{24}\right)q^n & \equiv y^{(2k-1)}_{\beta+1,1} f_7^{2}f_{49}^{2}\left(\dfrac{B(q^7)}{C(q^7)} -q\dfrac{A(q^7)}{B(q^7)} \right. \nonumber \\ 
% & \left. -q^2+q^5\dfrac{C(q^7)}{A(q^7)}\right)^2\pmod{7^{k}}.
% \end{align}
% Extracting the terms containing $q^{7n+4}$ from both sides of the equation, we arrive at \eqref{c5}. 


\begin{thebibliography}{}
\bibitem{ABD}
Z. Ahmed, N.D. Baruah, and M.G. Dastidar, New congruences modulo 5 for the number of 2-color
partitions, J. Number Theory 157 (2015), 184--198.
\bibitem{BB}
B. C. Berndt, Ramanujan's Notebooks, Part III (Springer, New York, 1991).
\bibitem{SC}
S. Chern, New congruences for 2-color partitions, J. Number Theory 163 (2016), 474--481.
\bibitem{CS}
S. Chowla, Congruence properties of partitions, J. Lond. Math. Soc. 9, 247 (1934).
\bibitem{HH}
M. D. Hirschhorn and D. C. Hunt, A simple proof of the Ramanujan conjecture for powers of 5, J. Reine Angew. Math. 326 (1981), 1--17.
\bibitem{FG}
F.G. Garvan, A simple proof of Watson’s partition congruences for powers of 7, J. Aust. Math. Soc. 36, 316--334 (1984).
\bibitem{KGR}
K.G. Ramanathan, Identities and congruences of the Ramanujan type, Canad.J.Math., 2 (1950), 168--178.
\bibitem{Ram2}
S. Ramanujan: Collected Papers of Srinivasa Ramanujan. AMS, Chelsea (2000).
\bibitem{RD}
D. Ranganatha, Ramanujan-type congruences modulo powers of 5 and 7. Indian J. Pure Appl. Math. 48(2017), 449--465.
\bibitem{LW}
L. Wang, Congruences modulo powers of 5 for two restricted bipartitions. Ramanujan J., 44 (2017), 471--491.
\bibitem{GNW}
G. N. Watson, Ramanujans Vermutung \"{u}ber Zerf\"{a}llungsanzahlen, J. reine angew. Math. 179 (1938), 97--128.
\end{thebibliography}
\end{document}